\newtheorem{thm}{Theorem}
\newtheorem{fact}[thm]{Fact}
\newtheorem{lem}[thm]{Lemma}
\theoremstyle{definition}
\newtheorem{rem}[thm]{Remark}
\newcommand{\ts}{\hspace{0.5pt}}
\newcommand{\nts}{\hspace{-0.5pt}}
\newcommand{\RR}{\mathbb{R}\ts}
\newcommand{\QQ}{\mathbb{Q}\ts}
\newcommand{\ZZ}{\mathbb{Z}}
\newcommand{\TT}{\mathbb{T}}
\newcommand{\NN}{\mathbb{N}}
\newcommand{\XX}{\mathbb{X}\ts}
\newcommand{\YY}{\mathbb{Y}}
\newcommand{\CC}{\mathbb{C}}
\newcommand{\cA}{\mathcal{A}}
\newcommand{\cB}{\mathcal{B}}
\newcommand{\cE}{\mathcal{E}}
\newcommand{\cG}{\mathcal{G}}
\newcommand{\cS}{\mathcal{S}}
\newcommand{\cT}{\mathcal{T}}
\newcommand{\cR}{\mathcal{R}}
\newcommand{\cO}{\mathcal{O}}
\newcommand{\cU}{\mathcal{U}}
\newcommand{\xp}{x^{\ts \prime}}
\newcommand{\yp}{y^{\ts \prime}}
\newcommand{\one}{\mathbbm{1}}
\newcommand{\Aut}{\mathrm{Aut}}
\newcommand{\cent}{\mathrm{cent}}
\newcommand{\norm}{\mathrm{norm}}
\newcommand{\id}{\mathrm{Id}}
\newcommand{\GA}{\mathrm{GA}}
\newcommand{\GL}{\mathrm{GL}}
\newcommand{\PGL}{\mathrm{PGL}}
\newcommand{\Mat}{\mathrm{Mat}}
\newcommand{\bs}{\boldsymbol}
\begin{document}

\title[Reversing and extended symmetries of dynamical systems]{A 
 brief guide to reversing and extended\\[2mm] symmetries
  of dynamical systems}

\author{Michael Baake}
\address{Faculty of Mathematics, Bielefeld University, 
  Box 100131, 33501 Bielefeld, Germany}
\email{mbaake@math.uni-bielefeld.de}

\begin{abstract}
  The reversing symmetry group is a well-studied extension of the
  symmetry group of a dynamical system, the latter being defined by
  the action of a single homeomorphism on a topological space. While
  it is traditionally considered in nonlinear dynamics, where the
  space is simple but the map is complicated, it has an interesting
  counterpart in symbolic dynamics, where the map is simple but the
  space is not.  Moreover, there is an interesting extension to the
  case of higher-dimensional shifts, where a similar concept can be
  introduced via the centraliser and the normaliser of the acting
  group in the full automorphism group of the shift space.  We recall
  the basic notions and review some of the known results, in a fairly
  informal manner, to give a first impression of the phenomena that
  can show up in the extension from the centraliser to the normaliser,
  with some emphasis on recent developments.
\end{abstract}

\maketitle

\section{Introduction}\label{sec:intro}

Symmetries of dynamical systems are important objects to study, as
they help in understanding the orbit structure and many other
properties. Moreover, the group of symmetries is a topological
invariant that can be useful for distinguishing between different
dynamical systems. Naturally, this invariant is generally weaker than
other invariants (such as those from (co{\ts}-)homology or homotopy
theory), but often easier to access.

For both aspects, studying properties and defining invariants, one is
clearly interested in effective generalisations or extensions of the
symmetry group.  Inspired by the \mbox{time{\ts}-}reversal symmetry of
many fundamental equations in physics, one obvious step in this
direction is provided by the \emph{reversing symmetry group} of a
dynamical system, which{\ts}---{\ts}in the case of
reversibility{\ts}---{\ts}is an index-$2$ extension of the symmetry
group.

Traditionally, the majority of the studies has concentrated on
concrete dynamical systems, where the space is usually simple, but the
mapping(s) might be complicated.  Even for toral automorphism, the
answer is amazingly rich.  There is a complementary picture, which
arises through the coding of itineraries and leads to the analogous
questions in symbolic dynamics \cite{MH}. Here, the mapping(s) are
simple, but the space (usually a closed shift space) is complicated,
and this is particularly so when going to higher-dimensional shifts.

In this brief introductory review, we recall the basic definitions and
notions, and present some results from the large body of literature
that has accumulated. Clearly, the exposition cannot be complete in
any way, whence the references will provide further directions.

After some examples from the classic theory of concrete dynamical
systems, we shall stroll through some more recent results on the
complementary picture from symbolic dynamics.

\section{General setting and notions}\label{sec:prelim}

A convenient starting point is a topological space $\XX$, which is
usually (but not always) assumed to be compact, and a mapping
$T \in \Aut (\XX)$, where the automorphism group is understood in the
Smale sense, meaning that it is the group of \emph{all} homeomorphisms
of $\XX$. The pair $(\XX, T)$ then defines a (topological)
\emph{dynamical system}, and the group
$\langle T \rangle \subset \Aut (\XX)$ is important. Now, we define
the \emph{symmetry group} of $(\XX, T)$ as
\begin{equation}\label{eq:def-S}
  \cS (\XX,T) \, := \, \{ G \in \Aut (\XX) : 
     G \circ T = T \circ G \} \, = \,
  \cent^{}_{\Aut (\XX)} ( \langle T \rangle ) 
     \, = \, \Aut (\XX,T) \ts .
\end{equation}
This group plays an important role in the analysis of $(\XX, T)$, for
instance in the context of periodic orbits and dynamical zeta
functions. Its is also a useful tool in the classification of
dynamical systems, because it is a topological invariant.

\begin{rem}
  The group $\Aut (\XX,T)$ is often used as a starting point for
  algebraic considerations, and then simply called the automorphism
  group of the dynamical system, but this is{\ts}---{\ts}as we shall
  see later {\ts}---{\ts}a use of the word that is too restrictive,
  and effectively excludes many natural mappings from the
  consideration.  We will thus not use this notation, and rather view
  $\cS (\XX,T)$ as a subgroup of $\Aut (\XX)$ in the Smale sense.

  In some cases, the group $\Aut (\XX)$ might be too big a `universe'
  to consider, and some subgroup of it is a more natural choice, for
  instance when some additional structure of $\XX$ should be
  preserved. This is particularly so if some general results are
  available that imply $\cS (\XX,T)$ and $\cR (\XX,T)$ to be subgroups
  of some $\,\cU \subset \Aut (\XX)$. In this case, one can start with
  $\cU$, and simplify the algebraic derivations considerably. The
  above point simply is that $\cU$ should generally \emph{not} be
  chosen as $\Aut (\XX, T)$, as this is too restrictive.
\end{rem}

Since we will not consider the case that $T$ is not invertible, a
natural extension of $\cS (\XX,T)$ is given by
\begin{equation}\label{eq:def-R}
  \cR (\XX,T) \, := \, \{ G \in \Aut (\XX) : 
  G \circ T \circ G^{-1} = T^{\pm 1} \} \ts ,
\end{equation}
which is motivated by the \mbox{time{\ts}-}reversal symmetries of many
fundamental equations of physics; see \cite{Sev,LR} and references
therein for background. From now on, we write $G \ts T$ instead of
$G\circ T$ etc.\ for ease of notation.  The relation between
$\cS (\XX,T)$ and $\cR (\XX,T)$ can be summarised as follows; see
\cite{BR-Bulletin} and referenced therein.

\begin{thm}
  Let\/ $(\XX, T)$ be a topological dynamical system. Then,
  $\cR (\XX,T)\subset \Aut (\XX)$ is a group, with\/
  $\langle T \rangle$ and\/ $\cS (\XX,T)$ as normal subgroups. One
  either has\/ $\cR (\XX,T) = \cS (\XX,T)$ or\/
  $[\cR (\XX,T) : \cS (\XX,T)]=2$. In the latter case, the systems is
  reversible.
      
  Moreover, if\/ $T^2 \ne \id$ and if there is an involution\/ $H$
  with\/ $HTH^{-1} = T^{-1}$, one has
\[
     \cR (\XX,T) \, = \, \cS (\XX,T) \rtimes \langle H \rangle
     \, \simeq \, \cS (\XX,T) \rtimes C_{2} \ts ,
\]   
  which is the standard form of reversibility.
\end{thm}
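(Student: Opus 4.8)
The plan is to verify the claimed structural statements in the order they appear. The theorem has several parts: $\cR(\XX,T)$ is a group; $\langle T\rangle$ and $\cS(\XX,T)$ are normal subgroups; the index of $\cS$ in $\cR$ is $1$ or $2$; and finally the semidirect product decomposition under the stated hypotheses. Let me sketch how I would prove each.

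First I would show that $\cR(\XX,T)$ is a group. Let me define the map $\varepsilon\colon \cR(\XX,T)\to\{\pm 1\}$ by setting $\varepsilon(G)=+1$ if $GTG^{-1}=T$ and $\varepsilon(G)=-1$ if $GTG^{-1}=T^{-1}$ (assuming $T^2\neq\id$, so these cases are disjoint and $\varepsilon$ is well defined). The key computation is that for $G_1,G_2\in\cR(\XX,T)$ one has $(G_1G_2)T(G_1G_2)^{-1}=G_1(G_2TG_2^{-1})G_1^{-1}=G_1T^{\varepsilon(G_2)}G_1^{-1}=T^{\varepsilon(G_1)\varepsilon(G_2)}$, so $\cR$ is closed under composition and $\varepsilon$ is a homomorphism. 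Since $\id\in\cR$ and $GTG^{-1}=T^{\pm1}$ implies $G^{-1}TG=T^{\pm1}$ (conjugating back), $\cR$ is closed under inverses. Thus $\cR(\XX,T)\subset\Aut(\XX)$ is a group, and $\cS(\XX,T)=\ker\varepsilon$, which is automatically normal. Normality of $\langle T\rangle$ follows because conjugation by any $G\in\cR$ sends $T$ to $T^{\pm1}$, hence maps $\langle T\rangle$ to itself. The index statement is then immediate: $\cR/\cS\simeq\operatorname{im}\varepsilon$ is a subgroup of $C_2$, so $[\cR:\cS]\in\{1,2\}$, the value $2$ being exactly the reversible case.

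For the final decomposition, I would argue as follows. Assume $T^2\neq\id$ and that there is an involution $H$ with $HTH^{-1}=T^{-1}$. Then $H\in\cR(\XX,T)$ with $\varepsilon(H)=-1$, so the system is reversible and $\langle H\rangle\simeq C_2$ maps isomorphically onto $\operatorname{im}\varepsilon=\{\pm1\}$ under $\varepsilon$. Because $\cS=\ker\varepsilon$ is normal and $\langle H\rangle$ is a complement meeting $\cS$ trivially (as $H\notin\cS$ and $H^2=\id$), the short exact sequence $1\to\cS\to\cR\to C_2\to 1$ splits, giving $\cR(\XX,T)=\cS(\XX,T)\rtimes\langle H\rangle\simeq\cS(\XX,T)\rtimes C_2$. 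To make the semidirect product claim fully rigorous I would verify the two defining conditions: $\cS\cap\langle H\rangle=\{\id\}$ and $\cS\cdot\langle H\rangle=\cR$; the latter holds since any $G\in\cR$ with $\varepsilon(G)=-1$ satisfies $\varepsilon(GH)=+1$, so $GH\in\cS$ and hence $G\in\cS H$.

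The main obstacle, and the only point requiring genuine care, is the well-definedness of $\varepsilon$: this is precisely where the hypothesis $T^2\neq\id$ is essential. If $T^2=\id$ then $T=T^{-1}$, the two conjugation conditions coincide, the distinction between symmetries and genuine reversing symmetries collapses, and the index-$2$ and semidirect-product statements become vacuous or degenerate. I would therefore state at the outset that $T$ has infinite order (or at least order greater than $2$) so that $T$ and $T^{-1}$ are distinct, making $\varepsilon$ unambiguous; everything else is then routine group theory built on this sign homomorphism. The only remaining subtlety is confirming that $H\notin\cS$, which follows directly from $HTH^{-1}=T^{-1}\neq T$.
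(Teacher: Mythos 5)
Your proposal is correct, and it follows the standard argument (the paper itself gives no proof here, deferring to \cite{BR-Bulletin}, where essentially this sign-homomorphism $\varepsilon\colon \cR(\XX,T)\to\{\pm1\}$ with kernel $\cS(\XX,T)$ is the route taken). Your handling of the degenerate case $T^{2}=\id$ --- where $\cR=\cS$ and the index and splitting statements trivialise --- is also the right caveat to record.
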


An element that conjugates $T$ into its inverse (where we assume
$T^2 \ne \id$) is called a \emph{reversor}.  An elementary observation
is the fact that a reversor cannot be of odd order, so it is either of
even or of infinite order. When the order is finite, hence of the form
$2^\ell (2m+1)$ for some $\ell\geqslant 1$, there exists another
reversor of order $2^\ell$.  When $T$ possesses an involutory
reversor, $R$ say, one has $T=T R^2 = (TR)R$, where
$(TR)^2 = TR \ts\ts TR = T \ts T^{-1}=\id$, so $T$ is the product of
two involutions. This is a frequently used approach in the older
literature, before the group-theoretic setting showed \cite{Lamb,G1}
that the more general approach is natural and helpful; see
\cite{LR,BR-Bulletin} and references therein for details.

As is implicit from our formulation so far, reversibility is not an
interesting concept when $T$ itself is an involution. More generally,
when $T$ has finite order, the structure of $\cR (\XX,T)$ is a
group-theoretic problem, and of independent interest; see \cite{OS}
for a concise exposition. However, in the context of dynamical
systems, one is mainly interest in the case that
$\langle T \rangle \simeq \ZZ$. Then, one can slightly change the
point of view by considering $T$ as defining a continuous group action
of $\ZZ$ on $\XX$, which is often reflected by the modified notation
$(\XX, \ZZ)$ for the topological dynamical system. From now on, unless
explicitly stated otherwise, we shall adopt this point of view here,
too. The following result is elementary.

\begin{fact}
  When\/ $T$ is not of finite order, one has\/
  $\cR (\XX,T) = \norm^{}_{\Aut (\XX)} ( \langle T \rangle )$.
\end{fact}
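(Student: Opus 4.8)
The plan is to show the two sets are equal by double inclusion, using the defining conditions directly.

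First I would recall the definition of the normaliser: $\norm_{\Aut(\XX)}(\langle T\rangle)$ consists of all $G \in \Aut(\XX)$ with $G \langle T\rangle G^{-1} = \langle T\rangle$. Since $\langle T\rangle$ is generated by $T$, this condition is equivalent to the single requirement that $G T G^{-1} \in \langle T\rangle$, i.e.\ $G T G^{-1} = T^{k}$ for some $k \in \ZZ$. The inclusion $\cR(\XX,T) \subseteq \norm_{\Aut(\XX)}(\langle T\rangle)$ is then immediate from \eqref{eq:def-R}, since an element of $\cR(\XX,T)$ satisfies $G T G^{-1} = T^{\pm 1}$, which in particular lies in $\langle T\rangle$; this inclusion needs no hypothesis on the order of $T$.

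The substance of the statement lies in the reverse inclusion, and this is where the assumption that $T$ has infinite order enters. Suppose $G \in \norm_{\Aut(\XX)}(\langle T\rangle)$, so that $G T G^{-1} = T^{k}$ for some integer $k$. Applying the same to $G^{-1}$ gives $G^{-1} T G = T^{j}$ for some integer $j$, and conjugating the first relation by $G^{-1}$ (or substituting) yields $T = G^{-1} T^{k} G = (G^{-1} T G)^{k} = T^{jk}$. Because $\langle T\rangle \simeq \ZZ$ — which is exactly what $T$ having infinite order means — the equation $T^{jk} = T$ forces $jk = 1$ in $\ZZ$, hence $k = j = \pm 1$. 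Therefore $G T G^{-1} = T^{\pm 1}$, so $G \in \cR(\XX,T)$.

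The main obstacle, and the only place where care is required, is the step $jk = 1$: it is precisely the faithfulness of the $\ZZ$-action that lets one cancel the exponents and conclude $k \in \{+1,-1\}$. If $T$ had finite order $n$, the conjugate $GTG^{-1} = T^k$ could have any $k$ with $\gcd(k,n)=1$, so $\norm_{\Aut(\XX)}(\langle T\rangle)$ would generally be strictly larger than $\cR(\XX,T)$; this is why the hypothesis is stated and why the group-theoretic problem mentioned earlier for finite-order $T$ is genuinely different. Once $k = \pm 1$ is established the argument closes with no further computation.
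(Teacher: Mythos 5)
Your proposal is correct, and the paper offers no written proof of this Fact (it is merely labelled ``elementary''), so your double-inclusion argument via $GTG^{-1}=T^k$, $G^{-1}TG=T^j$ and $T^{jk}=T$ forcing $jk=1$ is exactly the intended reasoning. One caveat: your parenthetical claim that $G\langle T\rangle G^{-1}=\langle T\rangle$ is \emph{equivalent} to the one-sided condition $GTG^{-1}\in\langle T\rangle$ is false precisely in the infinite-order regime of this Fact --- a relation $GTG^{-1}=T^{2}$ would give $G\langle T\rangle G^{-1}=\langle T^{2}\rangle\subsetneq\langle T\rangle$, so the one-sided condition only defines a submonoid. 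Your argument survives because the step yielding $G^{-1}TG=T^{j}$ implicitly uses the genuine two-sided definition (equivalently, that the normaliser is a group, so $G^{-1}$ normalises as well); you should delete or correct that equivalence remark, but no step of the actual proof fails.
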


It is thus the interplay between the (topological) centraliser and
normaliser that is added in the extension from $\cS (\XX,T)$ to
$\cR (\XX,T)$. One simple (but frequently useful) instance of this is
given by the following result, where $C_\infty$ and
$D_\infty = C_\infty \rtimes C^{}_{2}$ denote the infinite cyclic and
dihedral group, respectively.

\begin{thm}[{\cite[Thm.~1 and Cor.~1]{BR-Bulletin}}]
  Let\/ $T \in \Aut (\XX)$ be of infinite order. If\/
  $\cS (\XX,T) \simeq C_{\infty}$ and if\/ $T$ is reversible, one has\/
  $\cR (\XX,T) = \cS (\XX,T) \rtimes C_{2} \simeq D_{\infty}$, and all
  reversors of\/ $T$ are involutions.
   
  Conversely, if all reversors of\/ $T$ are involutions, the symmetry
  group\/ $\cS (\XX,T)$ is Abelian.
\end{thm}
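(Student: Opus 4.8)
The plan is to treat the two implications separately, using the structural theorem on $\cR(\XX,T)$ (in particular, that $\cS(\XX,T)$ is a normal subgroup of $\cR(\XX,T)$ of index $1$ or $2$) as the backbone, and then to exploit the rigid automorphism structure of $C_{\infty}$, namely that $\Aut (C_{\infty}) \simeq C_{2}$ consists only of the identity and inversion.

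For the forward implication, I would start from reversibility, which by the first theorem gives $[\cR(\XX,T):\cS(\XX,T)]=2$, so I can fix a reversor $R \notin \cS(\XX,T)$; it lies outside $\cS(\XX,T)$ precisely because $T^2 \neq \id$, which holds as $T$ has infinite order. Writing $\cS(\XX,T)=\langle S \rangle \simeq C_{\infty}$ and $T=S^{k}$ with $k \neq 0$, the key observation is that conjugation by $R$ is an automorphism of the normal subgroup $\cS(\XX,T)$, hence either the identity or inversion. Since $R \ts S^{k} R^{-1} = R \ts T \ts R^{-1} = T^{-1} = S^{-k}$ with $k \neq 0$, it must be inversion, that is $R \ts S \ts R^{-1} = S^{-1}$. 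To see that $R$ is an involution, I would note that $R^{2} \in \cS(\XX,T)$ by the coset structure, say $R^{2}=S^{m}$, and then compare $R \ts R^{2} R^{-1} = R^{2} = S^{m}$ with $R \ts S^{m} R^{-1} = S^{-m}$; the infinite order of $S$ forces $m=0$, so $R^{2}=\id$. The same computation applied to an arbitrary reversor, which necessarily has the form $S^{j} R$, gives $(S^{j} R)^{2}=\id$, so all reversors are involutions. Finally, from $\cS(\XX,T) \cap \langle R \rangle = \{\id\}$, $\langle R \rangle \simeq C_{2}$, and $\cS(\XX,T)\langle R \rangle = \cR(\XX,T)$, I conclude $\cR(\XX,T)=\cS(\XX,T)\rtimes \langle R \rangle$ with $R$ acting by inversion, which is exactly $D_{\infty}$.

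For the converse, I would fix one involutory reversor $R$, available because the hypothesis presupposes reversibility. The crucial trick is that multiplying a reversor by a symmetry again produces a reversor: for any $G \in \cS(\XX,T)$ one checks directly that $(RG)\ts T \ts (RG)^{-1} = R \ts T \ts R^{-1} = T^{-1}$, using that $G$ commutes with $T^{\pm 1}$. By hypothesis $RG$ is then an involution, so $RG = (RG)^{-1} = G^{-1} R^{-1} = G^{-1} R$, whence $R \ts G \ts R^{-1} = G^{-1}$ for every $G \in \cS(\XX,T)$. Thus conjugation by $R$ restricts to the inversion map on $\cS(\XX,T)$; but inversion is a group homomorphism only for Abelian groups, so $\cS(\XX,T)$ must be Abelian.

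I expect the main obstacle to be conceptual rather than computational. In the forward direction, the delicate point is justifying that conjugation by $R$ genuinely induces an automorphism of $\cS(\XX,T)$, which rests on the normality supplied by the first theorem, and then pinning this automorphism down via $\Aut (C_{\infty}) \simeq C_{2}$. In the converse, the subtlety is the implicit reversibility assumption: if $T$ admitted no reversor at all, the statement \emph{all reversors are involutions} would be vacuously true while the conclusion could fail, so one must read the hypothesis as presupposing the existence of at least one reversor.
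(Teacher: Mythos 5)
Your proof is correct, and the paper itself offers no proof of this statement\,---\,it is quoted verbatim from \cite{BR-Bulletin}\,---\,so there is nothing to diverge from; your argument (pinning down the conjugation action via $\Aut(C_{\infty})\simeq C_{2}$ for the forward direction, and the computation $RG=(RG)^{-1}\Rightarrow RGR^{-1}=G^{-1}$ for the converse) is exactly the standard one from that reference. Your closing remark that the converse must be read as presupposing the existence of a reversor is also the correct reading of the statement.
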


Clearly, in the setting of dynamical systems, one could equally well
consider the analogous questions for the \mbox{measure{\ts}-}theoretic
centraliser and normaliser, and this is indeed frequently done in the
literature; compare \cite{GLR,K-book,FKL} and references therein.
Since, in many relevant cases, the \mbox{measure{\ts}-}theoretic
symmetry groups turn out to be topological (see \cite{K-book} for
results in this direction), we concentrate on the latter situation in
this overview.

In what follows, we shall meet two rather different general
situations, as briefly indicated in the introduction. On the one hand,
there are many systems from nonlinear dynamics where the space is
simple, but the map is complicated. In this case, we will write
$\cS (T)$ instead of $\cS (\XX,T)$ to emphasise the mapping. Likewise,
when we are in the complementary situation (of symbolic dynamics, say)
with a simple map acting on a more complicated space, we will use
$\cS (\XX)$ instead to highlight the difference. This also matches the
widely used conventions in these two directions.

\section{Concrete systems from nonlinear dynamics}

In this section, we will describe, in a somewhat informal manner,
how symmetries and reversing symmetries arise in three particular
families of dynamical systems, namely trace maps, toral
automorphisms, and polynomial autormorphisms of the plane.
Clearly, there are many other relevant examples, some of which
can be found in \cite{Sev,LR,OS} and references therein.

\subsection{Trace maps}

This class of dynamical system arises in the study of
\mbox{one{\ts}-}dimensional Schr\"{o}dinger operators with aperiodic
potentials of substitutive origin, compare \cite{DY} and references
therein, and provide a powerful tool for the study of their spectra
and transport properties. The paradigmatic \emph{Fibonacci trace map}
in $3$-space is given by
\[
     (x,y,z) \, \longmapsto \, (y,z,2\ts yz - x)
\]
and is reversible, with involutory reversor $(x,y,z)\mapsto (z,y,x)$;
see \cite{RB} and references given there. The group{\ts}-theoretic
`universe' to consider here is given by the group of $3$-dimensional
invertible polynomial mappings that preserve the
Fricke{\ts\ts}--{\nts}Vogt invariant
\[
    I (x,y,z) \, = \, x^2 + y^2 + z^2 - 2 \ts x y z - 1
\]
and fix the point $(1,1,1)$; see \cite{RB,BGJ,BR-trace} and references
therein for more. This group of mappings is isomorphic with
$\PGL (2,\ZZ)$, and can thus be analysed by classic methods, including
the theory of binary quadratic forms.

In other words, the analysis of (reversing) symmetries of trace maps
is equivalent to the determination of $\cS (M)$ and $\cR(M)$ for
matrices $M\in \PGL (2,\ZZ)$. Since
\[
   \PGL(2,\ZZ) \, \simeq \, \GL(2,\ZZ)/\{\pm \one\} \ts ,
\]  
the following result is obvious.

\begin{fact}
   Let\/ $M\in\PGL(2,\ZZ)$ and\/ $M'$ be either of the 
   two corresponding matrices in\/ $\GL(2,\ZZ)$. 
   Then, the symmetry group\/ $\cS (M)$ is given by
\[
    \cS (M) \, = \, \cent^{}_{\PGL(2,\ZZ)} ( \langle M 
      \rangle ) \, = \, \cent^{}_{\GL(2,\ZZ)} ( \langle M'
      \rangle ) / \{ \pm \one\} \ts .
\]   
\end{fact}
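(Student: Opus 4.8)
The plan is to read off the first equality directly from the definition in \eqref{eq:def-S} and to obtain the second from the elementary behaviour of centralisers under the central quotient $\pi \colon \GL(2,\ZZ) \to \PGL(2,\ZZ)$, whose kernel is $\{\pm\one\}$. I would fix the ambient `universe' as $\cU = \PGL(2,\ZZ)$, so that $\cS(M) = \cent^{}_{\PGL(2,\ZZ)}(\langle M\rangle)$ is immediate; since a matrix commutes with every power of $M$ as soon as it commutes with $M$, one has $\cent^{}_{\PGL(2,\ZZ)}(\langle M\rangle) = \cent^{}_{\PGL(2,\ZZ)}(M)$, and likewise in $\GL(2,\ZZ)$. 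This settles the first equality and reduces the claim to showing
\[
   \cent^{}_{\PGL(2,\ZZ)}(M) \, = \, \pi\bigl(\cent^{}_{\GL(2,\ZZ)}(M')\bigr) ,
\]
where I use that $\{\pm\one\}$ is central, hence contained in $\cent^{}_{\GL(2,\ZZ)}(M')$, so that $\pi\bigl(\cent^{}_{\GL(2,\ZZ)}(M')\bigr) = \cent^{}_{\GL(2,\ZZ)}(M')/\{\pm\one\}$.

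One inclusion is immediate: if $B$ commutes with $M'$, then $\pi(B)$ commutes with $M = \pi(M')$ because $\pi$ is a homomorphism, giving $\pi\bigl(\cent^{}_{\GL(2,\ZZ)}(M')\bigr) \subseteq \cent^{}_{\PGL(2,\ZZ)}(M)$. The substance, and the only place where I expect real work, is the reverse inclusion. Given $\bar B \in \cent^{}_{\PGL(2,\ZZ)}(M)$, I would pick any lift $B \in \GL(2,\ZZ)$; then $\pi(BM') = \pi(M'B)$ forces $BM' = \pm\ts M'B$. If the sign is $+$, then $B \in \cent^{}_{\GL(2,\ZZ)}(M')$ and $\bar B$ lies on the right-hand side, as wanted. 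The task is thus to rule out the anticommuting case $BM' = -M'B$.

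Here I would invoke that $M'$ has infinite order, which is the case relevant to (hyperbolic) trace maps, where $\langle T\rangle \simeq \ZZ$. From $BM' = -M'B$ one gets $BM'B^{-1} = -M'$, and comparing traces (which are conjugation invariant) yields $\mathrm{tr}(M') = \mathrm{tr}(-M') = -\ts\mathrm{tr}(M')$, so $\mathrm{tr}(M') = 0$. By the Cayley--Hamilton theorem, any $M'\in\GL(2,\ZZ)$ with vanishing trace satisfies $M'^2 = -\det(M')\,\one = \pm\one$ and hence has finite order, contradicting the hypothesis. The sign is therefore forced to be $+$, the reverse inclusion follows, and the two equalities combine to the assertion.

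I expect the main obstacle to be precisely this exclusion of the anticommuting lift, together with the recognition that it requires the infinite-order hypothesis: for $M'$ of order $2$ or $4$ an anticommuting partner can genuinely exist (for instance $\mathrm{diag}(1,-1)$ and the off-diagonal flip), yielding elements of $\cent^{}_{\PGL(2,\ZZ)}(M)$ with no commuting lift and breaking the stated identity. In the trace-map setting this causes no trouble, as the relevant matrices are hyperbolic and thus of infinite order; alternatively, one may simply read the Fact under that standing assumption.
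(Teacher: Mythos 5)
The paper offers no argument for this Fact at all: it simply remarks that $\PGL(2,\ZZ) \simeq \GL(2,\ZZ)/\{\pm\one\}$ and declares the result ``obvious''. Your write-up supplies exactly the missing content, and it does so correctly: the first equality is indeed just the definition \eqref{eq:def-S} with $\cU = \PGL(2,\ZZ)$ together with the observation that centralising $M$ is the same as centralising $\langle M\rangle$; the inclusion $\pi\bigl(\cent^{}_{\GL(2,\ZZ)}(M')\bigr) \subseteq \cent^{}_{\PGL(2,\ZZ)}(M)$ is formal; and the reverse inclusion reduces to excluding a lift $B$ with $BM' = -M'B$, which your trace argument plus Cayley--Hamilton rules out whenever $M'$ has infinite order. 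More importantly, your closing caveat is a genuine and correct observation that the paper glosses over: for trace-zero elements such as $M'=\mathrm{diag}(1,-1)$ with the off-diagonal flip $B$, one has $BM'B^{-1}=-M'$ while neither $\pm B$ commutes with $M'$, so $\cent^{}_{\PGL(2,\ZZ)}(M)$ strictly contains $\cent^{}_{\GL(2,\ZZ)}(M')/\{\pm\one\}$ and the displayed identity fails. The Fact is therefore only ``obvious'' under the standing assumption (natural in the trace-map context, where the interesting $M$ are hyperbolic) that $M'$ has infinite order, or at least nonvanishing trace; your proof both establishes the statement in that regime and delimits precisely where it breaks down.
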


The symmetry groups can thus be derived from the analysis of general
(two-dimensional) toral automorphisms, which we will review in
Section~\ref{sec:toral}.  For the reversing symmetry group, the role
of $\{ \pm \one\}$ changes. Let $M\in\PGL(2,\ZZ)$ be given, and view
it as a $\GL(2,\ZZ)$-matrix. Then, we have to find all solutions $H$
to
\[
      H M H^{-1} \, = \, \pm M^{-1} ,
\]
where the calculation modulo $\pm \one$ means that we get
\emph{more} cases with reversibility than in $\GL(2,\ZZ)$. For
instance, $M=\left(\begin{smallmatrix} 1 & 1 \\ 1 & 0
\end{smallmatrix}\right)$ is reversible in $\PGL (2,\ZZ)$, 
with an involutory reversor, but not within $\GL (2,\ZZ)$,
while $M^2$ (Arnold's cat map \cite[Ex.~1.15]{AA}) is 
reversible in both groups. Within $\GL (2,\ZZ)$,
this phenomenon is called $2$-reversibility; see 
\cite{BR-trace} for details.

\subsection{Toral automorphisms}\label{sec:toral}

These systems, which are also known as `cat maps', are much studied
examples in chaotic dynamics and ergodic theory. Here, in order to
preserve the linear structure of $\TT^d$, the $d$-dimensional torus,
one usually works within $\cU = \GL(d,\ZZ) \subset \Aut(\TT^d)$; see
\cite{AA,AP,AW,Pet} for background. 

In the planar case ($d=2$), one thus has to deal with the group
$\GL (2,\ZZ)$. Here, if $M$ is an element of infinite order, one
always finds $\cS (M) \simeq C_{2} \times C_{\infty}$, where
$C_{2} = \{ \pm \one \}$. This follows for any parabolic element by a
simple calculation, and, for the hyperbolic elements, is a consequence
of Dirichlet's unit theorem for real quadratic number fields; see
\cite{BS-Book} for background.

\begin{rem}\label{rem:types}
  Reversible cases among elements of infinite order are of three
  possible types: When all reversors are involutions, one has
  $\cR (M) \simeq C_{2} \times D_{\infty}$, with
  $D_{\infty} = C_{\infty} \rtimes C_{2}$ as before; when all
  reversors are of fourth order, one has
  $\cR (M) \simeq C_{\infty} \rtimes C_{4}$; finally, when reversors
  both of order $2$ and $4$ exist, one has
  $\cR (M) \simeq (C_{2} \times C_{\infty}) \rtimes C_{2}$. All three
  types occur; see \cite[Thm.~2 and Ex.~4]{BR-Bulletin} and references
  given there for more.
\end{rem}

In this context, it is certainly a valid and interesting question how
the concepts can be extended to cover toral endomorphisms, or what
happens when one restricts to rational sublattices. This is connected
with looking at the related questions over finite fields and residue
class rings; see \cite{BNR,BRW} and references therein for some
results. \smallskip

The situation becomes more complex, and also more interesting, in
higher dimensions. In a first step, one has to analyse the symmetry
group of a toral automorphism, $M \in\GL (d,\ZZ)$ say, within this
matrix group. In the generic case, where $M$ is simple (meaning that
its eigenvalues are distinct) one can employ Dirichlet's unit theorem
again.  Let us first look at the case that the characteristic
polynomial $P(x) = \det (M-x \one)$ of $M$ is irreducible over $\ZZ$,
and hence also over $\QQ$.  Then, if $\lambda$ is any of the $d$
eigenvalues of $M$, it is an algebraic integer of degree
$d=n^{}_{1} + 2 \ts n^{}_{2}$, where $n^{}_{1}$ is the number of real
algebraic conjugates of $\lambda$ and $n^{}_{2}$ the number of complex
conjugate pairs among the algebraic conjugates.

Now, if $\cO$ is the maximal order in the algebraic number field
$\QQ (\lambda)$, Dirichlet's unit theorem states that the unit
group $\cO^\times$ is of the form
\begin{equation}\label{eq:Diri}
      \cO^\times \, \simeq \; T \nts\times \ZZ^{n^{}_{1} + n^{}_{2} - 1}
\end{equation}
with $T = \cO \cap \{ \text{roots of unity} \}$ being a finite cyclic 
group. The latter is known as the \emph{torsion subgroup} of 
$\cO^\times$. Due to the isomorphism of $\ZZ[\lambda]$ with
the ring $\ZZ[M]$ under our irreducibility  assumption on $P$,
one then has the following result \cite[Prop.~1 and Cor.~1]{BR-toral}.

\begin{thm}\label{thm:tor-1}
  Let\/ $M\in\GL (d,\ZZ)$ have an irreducible characteristic
  polynomial, $P(x)$, of degree\/ $d = n^{}_{1} + 2 \ts n^{}_{2}$,
  with\/ $n^{}_{1}$ and\/ $n^{}_{2}$ as above.  Then, $\cS (M)$ is
  isomorphic with a subgroup of\/ $\cO^\times$ of maximal rank, so
\[
     \cS (M) \, \simeq \; T' \nts \times \ZZ^{n^{}_{1} + n^{}_{2} - 1} ,
\]    
  where\/ $T'$ is a subgroup of the torsion group\/ $T$ from
  Eq.~\eqref{eq:Diri}.
  
  Moreover, whenever\/ $P(x)$ has a real root, which includes
  all cases with\/ $d$ odd, one simply has\/ $T' = \{ \pm 1 \} 
  \simeq C^{}_{2}$.
\end{thm}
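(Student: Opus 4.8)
The plan is to identify $\cS(M)$ with the unit group of a suitable order in $\QQ(\lambda)$ and then invoke Dirichlet's unit theorem. Since the relevant universe here is $\cU = \GL(d,\ZZ)$, we have $\cS(M) = \cent^{}_{\GL(d,\ZZ)}(\langle M \rangle) = \cent^{}_{\GL(d,\ZZ)}(M)$, so the task is to understand all integer matrices commuting with $M$. Because $P(x)$ is irreducible, the minimal polynomial of $M$ is a nontrivial divisor of $P$ and hence equals $P$; thus $M$ is non-derogatory, and the standard centraliser computation gives $\cent^{}_{\Mat(d,\QQ)}(M) = \QQ[M] \simeq \QQ[x]/(P) \simeq \QQ(\lambda)$. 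Intersecting with integer matrices, $\cO^{}_M := \QQ[M] \cap \Mat(d,\ZZ)$ is a subring of $\QQ(\lambda)$ that is finitely generated over $\ZZ$ and spans $\QQ(\lambda)$, hence an order with $\ZZ[M] \subseteq \cO^{}_M \subseteq \cO$, where $\ZZ[M] \simeq \ZZ[\lambda]$ by the cited isomorphism. An integer matrix commuting with $M$ lies in $\GL(d,\ZZ)$ precisely when it is invertible in $\cO^{}_M$, so $\cS(M) = \cO^\times_M$.

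Next I would apply Dirichlet's unit theorem to $\cO^{}_M$. As an order in $\QQ(\lambda)$, its unit group has the form $\cO^\times_M \simeq T' \times \ZZ^{n^{}_{1} + n^{}_{2} - 1}$, where $T'$ collects the roots of unity in $\cO^{}_M$ and the free rank $n^{}_{1} + n^{}_{2} - 1$ depends only on the signature of $\QQ(\lambda)$ and is therefore the same as for the maximal order $\cO$. Since $\cO^{}_M \subseteq \cO$, every root of unity in $\cO^{}_M$ already lies in $\cO$, so $T'$ is a subgroup of the torsion group $T$ of Eq.~\eqref{eq:Diri}; equivalently, $\cS(M)$ sits inside $\cO^\times$ as a subgroup of maximal rank.

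For the final assertion, suppose $P(x)$ has a real root, so that $\QQ(\lambda)$ admits a real embedding $\sigma\colon \QQ(\lambda) \hookrightarrow \RR$ (this is automatic when $d$ is odd, since an odd-degree polynomial always has a real root). Any root of unity $\zeta \in \QQ(\lambda)$ then satisfies $\sigma(\zeta) = \pm 1 = \sigma(\pm 1)$, and injectivity of $\sigma$ forces $\zeta = \pm 1$; hence $T = \{ \pm 1 \}$ and a fortiori $T' \subseteq \{ \pm 1 \}$. As $-\one \in \GL(d,\ZZ)$ always commutes with $M$, we have $-\one \in \cS(M) = \cO^\times_M$, giving $T' = \{ \pm 1 \} \simeq C^{}_{2}$.

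The step I expect to require the most care is the passage to the integral centraliser: it need not coincide with $\ZZ[M]$ but only with some intermediate order $\cO^{}_M$, and it is exactly this freedom that allows $T'$ to be a proper subgroup of $T$. Checking that $\cO^{}_M$ is a genuine order (finitely generated of full rank) and that Dirichlet's theorem delivers the same free rank for non-maximal orders are the points where one cannot simply quote the maximal-order case.
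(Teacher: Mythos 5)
Your argument is correct and follows essentially the same route the paper indicates: identifying the integral centraliser of a non-derogatory $M$ with the unit group of an order $\cO^{}_M$ satisfying $\ZZ[\lambda]\subseteq\cO^{}_M\subseteq\cO$ (exactly the picture the paper spells out in its later matrix-ring discussion) and then applying Dirichlet's unit theorem, whose free rank is the same for every order. The points you flag as delicate are the right ones, and your handling of them matches the cited source, so nothing is missing.
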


For our previous example, $M=\left(\begin{smallmatrix} 1 & 1 \\
1 & 0 \end{smallmatrix}\right)$, one finds $\cS (M) = \{ \pm \one \}
\times \nts \langle M \ts \rangle \simeq C_2 \times \ZZ$. Note that, in
general, the generators of the free part of $\cS (M)$ can correspond
to powers of fundamental units, which is related with the question of
the existence of matrix roots within $\GL (d,\ZZ)$; see
\cite{BR-toral} for more. One can quite easily extend
Theorem~\ref{thm:tor-1} to the case that $M$ is simple.  This is done
by factoring $P$ over $\ZZ$ and treating the factors separately
\cite[Thm.~1]{BR-trace}. \smallskip

Let us look at the reversibility of a matrix $M\in \GL (2,\ZZ)$.
A necessary condition clearly is that $M$ and $M^{-1}$ have
the same spectrum (including multiplicities). In other words,
if $P$ is the characteristic polynomial of $M$ with integer
coefficients, it must satisfy the self-reciprocity condition
\begin{equation}\label{eq:self-rec}
   P (x) \, = \, \frac{(-1)^d \ts x^d}{\det (M)} \, 
   P \bigl( \tfrac{1}{x}\bigr) .
\end{equation}
Now, if $d$ is odd or if $\det(M)=-1$, this relation implies
that $1$ or $-1$ is a root, and $P$ is reducible over $\ZZ$.
In particular, $d$ odd and $P$ irreducible immediately
excludes reversibility. This means that, generically, reversible
cases can only occur when $d$ is even and $\det (M) = 1$.

Note that, even if Eq.~\eqref{eq:self-rec} is satisfied, the
reversibility still depends on the underlying integer matrix $M$, and
the class number of $\ZZ[\lambda]$ enters. It is then clear that
deciding on reversibility is a problem that increases with growing
$d$; we refer to the discussion in \cite{BR-toral} for more. However,
for any given characteristic polynomial that is self-reciprocal
according to the condition of Eq.~\eqref{eq:self-rec}, there is at
least one reversible class of matrices, and this can be represented by
the Frobenius companion matrix \cite[Thm.~3]{BR-toral}. \smallskip

A natural extension of symmetries can be considered in the setting
of matrix rings rather than groups, such as $\Mat (d, K)$ instead of
$\GL (d, K)$, where $K$ can itself be a ring (such as $\ZZ$) or a
field (such as $\QQ$). Then, one can define
\[
      \cS (M) \, = \, \{ G \in \Mat (d, K) : [M,G]=0 \ts \} \ts .
\]
Concretely, if $M$ is an integer matrix with irreducible characteristic 
polynomial, and $\lambda$ is any of its roots, one finds $\cS (M)$ to be
isomorphic with an order $\cO$ in the number field $\QQ (\lambda)$ that
satisfies $\ZZ [\lambda] \subseteq \cO \subseteq \cO^{}_{\max}$,
where $\cO^{}_{\max}$ denotes the maximal order in $\QQ (\lambda)$;
see \cite[Ch.~III]{Jacob} as well as \cite[Sec.~3.3]{BR-toral} and references
given there for more.

\subsection{Polynomial automorphisms of the plane}

Let $K$ be a field and consider the group
$\cU^{}_{K} = \GA^{}_{2} (K)$ of polynomial automorphisms of the
affine plane over $K$. Consequently, we have $\XX = K^{2}$ in this
case, which need not be compact.  $\cU^{}_K$ consists of all mappings of
the form
\[
    \begin{pmatrix} x \\ y \end{pmatrix} \, \longmapsto \,
    \begin{pmatrix} P(x,y) \\ Q(x,y) \end{pmatrix}
\]
with $P,Q \in K[x,y]$, subject to the condition that the inverse
exists and is also polynomial. Note that, over general fields,
different polynomials might actually define the same mapping on $K^2$,
but we will distinguish them on the level of the polynomials.

In nonlinear dynamics, where $\GA^{}_{2} (\RR)$ and
$\GA^{}_{2} (\CC)$ have received considerable attention,
a common alternative notation is 
\[
      \xp  \, = \: P(x,y) \, , \quad \yp \, = \: Q(x,y) \ts .
\]
Frequently studied examples include the H\'{e}non quadratic map family,
defined by $P(x,y) = y$ and $Q(x,y) = - \delta \ts x + y^{2} + c$ with
constants $c,\delta \in \CC$ and $\delta \ne 0$. Quite often, for
instance in the context of area-preserving mappings, the starting
point is a polynomial automorphism in \emph{generalised standard form},
\[
     \xp \, = \: x + P^{}_{1} (y) \, , \quad
     \yp \, = \: y + P^{}_{2} (\xp) \ts ,
\]
with \mbox{single{\ts}-}{\nts}variable polynomials $P^{}_{1}$ and
$P^{}_{2}$; compare \cite{GM,RB-poly} and references therein.  Here,
the inverse is simply given by $y = \yp - P^{}_{2} (\xp)$ together
with $x = \xp - P^{}_{1} (y)$.

In a certain sense, such particular normal forms are important, but do
not exhaust the full power of the algebraic setting. Let us explain
this a little in the context of combinatorial group theory.  We begin
by defining three subgroups of $\GA^{}_{2} (K)$ as follows. First,
\[
     \cA \, := \, \big\{ (\bs{a}, M) : \bs{a} \in K^{2}, \,
     M \in \GL (2,K) \big\}
\]
is the group of \emph{affine} transformations, where $(\bs{a},M)$
encodes the mapping $\bs{x} \mapsto M \bs{x} + \bs{a}$. We write
$\bs{a}$ for a column vector, and tacitly identify the elements of
$\cA$ with the canonically corresponding elements of $\GA^{}_{2} (K)$.
Multiplication is defined by
\[
    (\bs{a}, A) (\bs{b}, B) \, = \, (\bs{a} + A \bs{b}, AB) \ts ,
\]
whence $\cA$ is a semi-direct product, namely $\cA = K^{2} \rtimes
\GL (2,K)$. The inverse of an element is $(\bs{a},A)^{-1} =
(-A^{-1} \bs{a}, A^{-1})$.

The second group, $\cE$, is known as the group of \emph{elementary}
transformations. It consists of all mappings of the form
\[
     \begin{pmatrix} x \\ y \end{pmatrix}  \, \longmapsto \,
     \begin{pmatrix} \alpha \ts x + P(y) \\
     \beta \ts y + v \end{pmatrix}
\]
with $P$ a \mbox{single{\ts}-}{\nts}variable polynomial and
$\alpha, \beta, v \in K$ subject to the condition
$\alpha \ts \beta \ne 0$. It is easy to check that the inverse exists
and it of the same form. Transformations of this kind map lines with
constant $y$-coordinate to lines of the same type. It is a well-known
fact that the group $\GA^{}_{2} (K)$ is generated by $\cA$ and $\cE$;
see \cite{Jung,Kulk} as well as \cite[Sec.~1.5]{Wright}.

Finally, our third group, $\cB$, is defined as the intersection
$ \cB \, = \, \cA \cap \cE $, with obvious meaning as subgroups of
$\GA^{}_{2} (K)$. The elements of $\cB$ are called \emph{basic}
transformations, and are mappings of the form
\[
     \begin{pmatrix} x \\ y \end{pmatrix} \, \longmapsto \,
     \begin{pmatrix} \alpha & \gamma \\ 0 & \beta 
     \end{pmatrix} \begin{pmatrix} x \\ y \end{pmatrix}
     + \begin{pmatrix} u \\ v \end{pmatrix}
\]
with $\alpha,\beta,\gamma,u,v \in K$ and $\alpha \ts \beta \ne 0$.
Clearly, also $\cB$ is a semi-direct product,
$\cB = K^{2} \rtimes \cT$, where $\cT$ denotes the subgroups of
$\GL (2,K)$ that consists of all invertible upper triangular matrices
over $K$.

Now, the following result \cite{Wright,Serre} is fundamental to the 
classification of (reversing) symmetries of polynomial automorphisms.

\begin{lem}
  The group\/ $\GA^{}_{2} (K)$ is the free product of the groups\/
  $\cA$ and\/ $\cE$, amalgamated along their intersection, $\cB$,
  which is abbreviated as\/
  $\GA^{}_{2} (K) = \cA \,\raisebox{2pt}{$\underset{\cB}{*}$}\, \cE$.
\end{lem}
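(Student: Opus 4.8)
The plan is to prove the amalgamated free product structure via the classical Jung--van der Kulk theorem, which is precisely the statement that $\GA^{}_2 (K) = \cA \,\raisebox{2pt}{$\underset{\cB}{*}$}\, \cE$. I will organise the argument around the two defining properties of an amalgamated free product $G = G^{}_1 \,\raisebox{2pt}{$\underset{H}{*}$}\, G^{}_2$ over a common subgroup $H = G^{}_1 \cap G^{}_2$: first, that $G$ is \emph{generated} by $G^{}_1$ and $G^{}_2$; and second, the \emph{normal form} (or uniqueness) property, namely that every element of $G \setminus H$ admits a representation as an alternating product $g^{}_1 g^{}_2 \cdots g^{}_n$ with consecutive factors drawn alternately from the cosets $G^{}_1 \setminus H$ and $G^{}_2 \setminus H$, and that no such reduced alternating product with $n \geqslant 1$ can equal the identity. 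The generation step is already recorded in the excerpt (the remark that $\GA^{}_2 (K)$ is generated by $\cA$ and $\cE$), so I may cite it; the substance lies entirely in establishing the normal form.

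First I would set up the reduction machinery by assigning to each polynomial automorphism its bidegree or total degree, using the fact that for a composition the degrees multiply in a controlled way. The key structural input is that a single elementary transformation outside $\cB$ strictly raises the degree in one variable, while affine maps preserve total degree. I would then argue that an arbitrary element of $\GA^{}_2 (K)$, being a composite of affine and elementary pieces by the generation statement, can be rewritten in reduced alternating form by absorbing any adjacent factors that both lie in $\cA$ (or both in $\cE$) into a single factor, and pushing any factor that happens to land in the amalgamated subgroup $\cB$ into its neighbour. This yields existence of the normal form.

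The uniqueness half is where the real work sits, and I expect it to be the main obstacle. The plan is to suppose, for contradiction, that a reduced alternating word $g^{}_1 \cdots g^{}_n$ of length $n \geqslant 1$ equals the identity (equivalently, to show two reduced words representing the same automorphism must coincide factor by factor modulo $\cB$), and to derive a contradiction by tracking the total degree of the partial composites. The crux is a degree inequality: when an elementary factor not in $\cB$ is composed with an affine factor not in $\cB$, the degree of the $y$-component (or the relevant variable) genuinely increases and cannot be cancelled by subsequent affine maps, since affine maps act linearly and cannot lower the degree of a genuinely nonlinear polynomial. Making this rigorous requires a careful analysis of leading forms: one shows that the highest-degree part of each partial product never collapses, so the total degree of the full reduced word is strictly larger than that of any proper prefix, forcing $n = 0$ if the product is the identity. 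This leading-form/degree-growth argument is the heart of Jung--van der Kulk and the step most prone to subtle cancellation issues over a general field $K$; here I would lean on the detailed treatments in \cite{Wright} and \cite{Serre}, the latter recasting the whole statement in the language of groups acting on trees, where the normal form follows from the absence of a global fixed point for the induced action. Invoking Serre's tree-theoretic criterion for amalgams would be the cleanest way to package the degree estimate and conclude.
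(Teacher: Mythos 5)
The paper does not prove this lemma at all: it is stated as a classical fact (essentially the Jung--van der Kulk theorem in its amalgam form) and attributed to \cite{Wright,Serre}, so there is no internal proof to compare against. Your outline is the standard route taken in those references, and it is correctly structured: generation by $\cA$ and $\cE$ (already recorded in the text via \cite{Jung,Kulk}), existence of a reduced alternating form by absorbing adjacent factors from the same subgroup and pushing $\cB$-factors into neighbours, and then the uniqueness half via a degree/leading-form estimate, equivalently packaged through the Bass--Serre construction of a tree on which $\GA^{}_{2}(K)$ acts with an edge as fundamental domain. Two caveats. First, your phrasing of the key inequality is slightly off: the degree of a reduced word is not ``strictly larger than that of any proper prefix'' (affine factors preserve degree), and a reduced word consisting of a single factor from $\cA \setminus \cB$ has degree $1$ yet must still be excluded, which requires the separate (trivial) remark that the identity lies in $\cB$. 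The correct statement is that for a reduced word the total degree equals the \emph{product} of the degrees of the elementary factors outside $\cB$, each of which is at least $2$; the work lies in showing that composition with affine factors cannot cancel the leading forms, and this is exactly the step you defer to \cite{Wright}. Second, the ``tree-theoretic criterion'' does not bypass that estimate: proving that the graph built from the coset spaces of $\cA$ and $\cE$ is acyclic is a re-encoding of the same no-cancellation argument, so citing Serre does not supply an independent proof. As a blind reconstruction of what the cited sources do, your proposal is sound in outline but is a programme rather than a proof; since the paper itself only cites the result, that is an acceptable level of detail here.
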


Through this result, the problem has been reset in a purely algebraic
way, and one can now explore the subgroup structure \cite{KaSo} of the
amalgamated free product. In particular, one can classify the Abelian
subgroups of $\GA^{}_{2} (K)$, which has trivial centre. Naturally,
$\cS (T)$ for a given $T\in \GA^{}_{2} (K)$ is more complex, and need
no longer be Abelian. When $K$ has characteristic $0$, one can derive
restrictions on the order of other symmetries, which gives access to
the finite subgroups of $\cS (T)$; for details, the reader is
referred to \cite{BR-poly}.

For an important subclass of transformations known as CR elements, one
can say a lot more. In particular, if $K$ is a field of characteristic
$0$, all reversors must be of finite order. If, in addition, the roots
of unity in $K$ are just $\{ \pm 1 \}$, any reversor is an involution
or an element of order $4$, which makes their detection feasible.  The
possible reversing symmetry groups in this case are then the same
three types we saw earlier, in Remark~\ref{rem:types}, for
$2$-dimensional toral automorphisms of infinite order. Since further
details in this setting of combinatorial group theory tend to be a bit
technical, we refer to \cite{BR-poly} and references therein for more.

\section{Shift spaces with faithful $\ZZ$-action}

All examples in the previous section shared the feature that the space
$\XX$ is simple, but the map $T$ on it is not. This is the standard
situation in most dynamical systems that arise from concrete problems,
for instance in nonlinear dynamics. However, it has long been known
\cite{MH} that there is a complementary picture, which arises by
coding orbits in such systems by symbolic sequences, for instance via
itineraries. The latter keep track of a \mbox{coarse{\ts}-}grained
structure in such a way that the full dynamics can be recovered from
them{\ts}---{\ts}at least almost surely in some suitable
\mbox{measure{\ts}-}theoretic sense.

This leads to \emph{symbolic dynamics}, where the space $\XX$ is
`replaced' by a closed shift space $\YY$ (often over a finite
alphabet), and $T$ by the action of the left shift, $S$. More
precisely, one constructs a conjugacy, a semi-conjugacy, or
(typically) a \mbox{measure{\ts}-}theoretic isomorphism that makes the
diagram
\[
\begin{CD}
   \XX @> T >> \XX \\
   @V \phi VV   @VV \phi V \\
   \YY @> S >> \YY
\end{CD}
\]
commutative and $\phi$ as `invertible as possible'. This motivates to
also consider symmetries and reversing symmetries of shift spaces,
where we shall always assume that the action of $\ZZ$ on the shift
space is \emph{faithful} in order to exclude degenerate situations. We
refer to \cite{Kitchens,LM} for general background, and to
\cite{KLP,LPS} for the study of topological Markov chains in this
context.

One immediate problem that arises is the fact that the symmetry group
of a shift space (now called $\XX$ again) is generally huge, in the
sense that it contains a copy of the free group of two generators ---
and is thus not amenable \cite{LM}. This turns a potential
classification into a wild problem, and not much has been done in this
direction.  On the other hand, as has long been known, it is also
possible that one simply gets
$\cS (\XX) = \langle S \ts \rangle \simeq \ZZ$, in which case one
speaks of a \emph{trivial centraliser}, or of a \emph{minimal symmetry
  group}. This is a form of \emph{rigidity}, for which different
mechanisms are possible. Interestingly, rigidity is not a rare
phenomenon \cite{BS}, but actually generic in some sense
\cite{Hoch}, which makes it rather relevant also in practice.

To explore the possibilities a little, let us assume that $\cA$ is a
finite set, called the \emph{alphabet}, and that
$\XX \subseteq \cA^{\ZZ}$ is a closed and shift-invariant set, which
is then automatically compact. Such a space is called a \emph{shift
  space}, or \emph{subshift} for short.

A special role has the `canonical' reversor $R$ defined by
\begin{equation}\label{eq:reflect}
     (R \ts x)^{}_{n} \, := \, x^{}_{-n}
\end{equation}
or any combination of $R$ with a power of the shift $S$. It is clear
that $R$ conjugates $S$ into its inverse on the full shift,
$\XX = \cA^\ZZ$.  More generally, one has the following property.

\begin{lem}
  Let\/ $\XX$ be a shift space with faithful shift action.  If\/ $\XX$
  is reflection-invariant, which means\/ $R (\XX) = \XX$ with the
  mapping\/ $R$ from Eq.~\eqref{eq:reflect}, the system is reversible,
  with\/ $\cR (\XX) = \cS (\XX) \rtimes C^{}_{2}$, where\/
  $C^{}_{2} = \langle R \ts\rangle$.
\end{lem}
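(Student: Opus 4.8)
The plan is to verify that $R$ is a genuine reversor of the shift action on $\XX$, and then to invoke the structural theorem recalled earlier in the excerpt. First I would check that $R$, as defined by $(R\ts x)^{}_{n} := x^{}_{-n}$, is a homeomorphism of $\cA^{\ZZ}$ onto itself: it is clearly an involution (so its own inverse), and it is continuous because it only permutes coordinates, hence respects the product topology. Under the reflection-invariance hypothesis $R(\XX) = \XX$, the restriction $R|^{}_{\XX}$ is then a well-defined element of $\Aut(\XX)$ in the Smale sense, and it remains an involution.

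The key algebraic step is to confirm the reversing relation $R\ts S\ts R^{-1} = S^{-1}$ on $\XX$. This is a direct coordinate computation: applying $S$ shifts indices, and conjugating by the reflection reverses the direction of the shift. Concretely, for $x \in \XX$ one reads off $(R\ts S\ts R\ts x)^{}_{n} = (S\ts R\ts x)^{}_{-n} = (R\ts x)^{}_{-n+1} = x^{}_{n-1} = (S^{-1} x)^{}_{n}$, so indeed $R\ts S\ts R^{-1} = S^{-1}$ since $R^{-1}=R$. Because the shift action is assumed faithful, $S$ has infinite order, and in particular $S^2 \ne \id$, so $R$ is a nontrivial reversor and the system is genuinely reversible. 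Thus $\cR(\XX) \ne \cS(\XX)$ and the index is exactly $2$.

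Finally I would apply the first theorem of the excerpt, in its ``standard form of reversibility'' clause: since we have exhibited an involution $H = R$ satisfying $H\ts S\ts H^{-1} = S^{-1}$ with $S^2 \ne \id$, that theorem yields immediately $\cR(\XX) = \cS(\XX) \rtimes \langle R \rangle$ with $\langle R \rangle \simeq C^{}_{2}$, which is exactly the claimed conclusion. I do not expect any serious obstacle here; the only point requiring a moment's care is that the hypothesis $R(\XX)=\XX$ is precisely what is needed to make $R$ an element of $\Aut(\XX)$ rather than merely a map on the ambient full shift $\cA^{\ZZ}$, and that the faithfulness assumption is what guarantees $S^2 \neq \id$ so that the reversibility is nondegenerate and the semidirect-product form applies.
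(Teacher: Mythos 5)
Your argument is correct and is essentially the intended one: the paper states this lemma without proof (deferring to [BRY]), but the natural argument is exactly what you give{\ts}---{\ts}verify that $R$ restricts to an involutory homeomorphism of $\XX$ thanks to $R(\XX)=\XX$, compute $R\ts S\ts R^{-1}=S^{-1}$ coordinatewise, note that faithfulness gives $S^{2}\ne\id$, and invoke the structure theorem on the standard form of reversibility. Your closing remarks correctly identify the two hypotheses that do the work, so there is nothing to add.
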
 

Let us collect a few examples of reversible subshifts, in an informal
manner; see \cite{BRY} and references therein for precise statements
and proofs, and \cite{Coven,Q,AS,P-F,TAO} for general background on
substitution generated subshifts. Among these examples are
\begin{enumerate}
\item the \emph{full} shift \cite{Kitchens,LM}, $\XX = \cA^{\ZZ}$,
  where $\cS (\XX)$ is huge (and not amenable);
\item any \emph{Sturmian} shift \cite{CH}, which is always palindromic
  \cite{DP} and hence reversible, with symmetry group
  $\cS (\XX) \simeq \ZZ$;
\item the \emph{period doubling} shift, defined by the primitive
  substitution rule $a \mapsto ab$, $b \mapsto aa$, again with
  $\cS (\XX) \simeq \ZZ$;
\item the \emph{Thue{\ts}--Morse} (TM) shift, defined by
  $a \mapsto ab$, $b \mapsto ba$, this time with
  $\cS (\XX) \simeq \ZZ \times C^{}_{2}$, where the extra symmetry is
  the letter exchange map defined by $a \leftrightarrow b$;
\item the \emph{square-free} shift, obtained as the orbit closure of
  the characteristic function of the \mbox{square{\ts}-}free integers,
  also with $\cS (\XX) \simeq \ZZ$.
\end{enumerate}

In fact, the last example is quite remarkable, as its rigidity
mechanism relies on the heredity of the shift, as was recently shown
by Mentzen \cite{Mentzen}.  Note that the \mbox{square{\ts}-}free
shift has positive topological entropy, but nevertheless possesses
minimal centraliser. Though this is not surprising in view of known
results from Toeplitz sequences \cite{BK}, it does show that rigidity
as a result of low complexity, as studied in \cite{CQY,CK0,CK,DDMP},
is only {one} of \emph{several} mechanisms. We shall see more in
Section~\ref{sec:multi}.  The \mbox{square{\ts}-}free shift is a
prominent example from the class of $\cB$-free shifts, see
\cite{Mariusz,Bart} and references therein, and also of interest in
the context of Sarnak's conjecture on the statistical independence of
the M\"{o}bius function from deterministic sequences (as discussed at
length in other contributions to this volume).

Of course, things are generally more subtle than in these
examples. First of all, a subshift can be irreversible, as happens for
the one defined by the binary substitution $a \mapsto aba$,
$b \mapsto baa$, where $\cR (\XX) = \cS (\XX) \simeq \ZZ$. Next,
consider the subshift $\XX^{}_{k,\ell}$ defined by the primitive
substitution
\[
       a \, \longmapsto \, a^k \ts b^\ell , \quad b \, \longmapsto \,
       b^k a^\ell
\]
with $k,\ell\in\NN$, which is reversible if and only if $k=\ell$. This
is an extension of the TM shift (which is the case $k=\ell=1$), in the
spirit of \cite{Keane,BGG}.  The symmetry group is
$\cS (\XX^{}_{k,\ell}) \simeq \ZZ \times C^{}_{2}$ in all cases, where
$C^{}_{2}$ is once again the group generated by the letter exchange
map.

Going to larger alphabets,
$\cA = \{ a^{}_{0}, a^{}_{1}, \ldots , a^{}_{N-1} \}$ say, one can
look at a cyclic extension of the TM shift, as defined by the
substitution $a_i \mapsto a_i \ts a_{i+1}$ with the index taken modulo
$N$.  This shift is reflection invariant only for $N=2$, but
nevertheless reversible for any $N$, and even with an involutory
reversor. The symmetry group is $\ZZ \times \nts C^{}_{\! N}$. \smallskip

The quaternary Rudin--Shapiro shift shows another phenomenon.  Its
symmetry group is $\ZZ \times C^{}_{2}$, and it is reversible, but no
reversor is an involution. Instead, there is a reversor of order $4$
(and all reversors have this order), and the reversing symmetry group
is $\ZZ \rtimes C^{}_{4}$, where the square of the generating element
of the cyclic group $C^{}_{4}$ is the extra (involutory) symmetry;
see \cite{BRY} for details on this and the previous examples.

\section{Shift spaces with faithful $\ZZ^d$-action}\label{sec:multi}

It is more than natural to also consider higher-dimensional shift
actions.  Here, given some alphabet $\cA$, a subshift is any closed
subspace $\XX \subseteq \cA^{\ZZ^d}$ that is invariant under the shift
in each of the $d$ directions. With
$\bs{n} = ( n^{}_{1}, \ldots , n^{}_{d})^{T} \in \ZZ^d$ and
$x^{}_{\bs{n}} = \bigl(x^{}_{n^{}_{1}}, \ldots ,
x^{}_{n^{}_{d}}\bigr)$, one defines the shift in direction $i$ by
\[
    (S^{}_{i} \ts x)^{}_{\bs{n}} \, := \, x^{}_{\bs{n} + \bs{e}^{}_{i}} \ts ,
\]
where $\bs{e}_{i}$ is the standard unit vector in direction $i$. The
individual shifts commute with one another, $S_i \ts S_j = S_j S_i$,
for all $1 \leqslant i,j \leqslant d$. Now, we define the
\emph{symmetry group} of $\XX$ as
\[
    \cS (\XX) \, = \, \cent^{}_{\Aut (\XX)} (\cG) \ts ,
\]
where $\cG := \langle S^{}_{1}, \ldots , S^{}_{d} \rangle$ is
a subgroup of $\Aut (\XX)$.

As before, we are only interested in subshifts with faithful shift
action, which means
$\cG = \langle S^{}_{1}\rangle \times \ldots \times \langle S^{}_{d}
\rangle \simeq \ZZ^d$,
where the direct product structure is a consequence of the
commutativity of the individual shifts.  In this case, we define the
\emph{group of extended symmetries} as
\[
    \cR (\XX) \, = \, \norm^{}_{\Aut (\XX)} (\cG) \ts ,
\]
which is the obvious extension of the \mbox{one{\ts}-}dimensional
case. As we shall see shortly, many of the obvious `symmetries' of
$\XX$ are only captured by this extension step.

Unlike before, the structure of the normaliser is generally much
richer now, which also means that $\cR (\XX)$ is a considerably better
topological invariant than $\cS (\XX)$. Indeed, the normaliser can
even be an \emph{infinite} extension of the centraliser when $d>1$, as
can be seen from the full shift as follows; see \cite[Lemma~4]{BRY}.

\begin{fact}\label{fact:full}
  Let\/ $d\in\NN$ and let\/ $\XX = \cA^{\ZZ^d}$ be the full\/
  $d$-dimensional shift over the\/ $($finite or infinite$\ts )$
  alphabet\/ $\cA$. Then, the group of extended symmetries is\/
  $\cR (\XX) = \cS (\XX) \rtimes \GL (d,\ZZ)$.
\end{fact}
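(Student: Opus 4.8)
The plan is to realise $\cR (\XX) = \norm^{}_{\Aut(\XX)}(\cG)$ as a \emph{split} extension of $\cS (\XX) = \cent^{}_{\Aut(\XX)}(\cG)$ by $\GL (d,\ZZ)$, by exhibiting an explicit homomorphic section. First I would set up the conjugation homomorphism. Any $G \in \cR (\XX)$ normalises $\cG$, so conjugation $S \mapsto G\,S\,G^{-1}$ restricts to an automorphism of the abstract group $\cG \simeq \ZZ^d$. Since $\Aut (\ZZ^d) = \GL (d,\ZZ)$, this defines a group homomorphism $\rho \colon \cR (\XX) \to \GL (d,\ZZ)$ whose value records how the generators are transformed, say $G\,S^{}_{i}\,G^{-1} = \prod_{j} S_{j}^{\, m_{ji}}$ with $M = (m_{ji}) = \rho(G)$. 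By construction $\ker\rho$ consists of the $G$ commuting with every $S^{}_{i}$, i.e.\ $\ker\rho = \cS (\XX)$, which is therefore normal, and we obtain the exact sequence $1 \to \cS (\XX) \to \cR (\XX) \xrightarrow{\rho} \GL (d,\ZZ)$. This step uses nothing about $\XX$ beyond faithfulness of the $\ZZ^d$-action.

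The heart of the matter, and the only place where $\XX$ being the \emph{full} shift enters, is to show that $\rho$ is onto and splits. For $M \in \GL (d,\ZZ)$ I would define $\hat{M} \in \Aut (\XX)$ by the coordinate relabelling $(\hat{M} x)^{}_{\bs n} := x^{}_{M^{-1}\bs n}$. Here it is essential that $\XX = \cA^{\ZZ^d}$ carries no pattern constraints, so relabelling positions by the bijection $\bs n \mapsto M^{-1}\bs n$ maps $\XX$ into itself; invertibility of $\hat{M}$ (with inverse $\widehat{M^{-1}}$) comes precisely from $M \in \GL (d,\ZZ)$, i.e.\ $M\ZZ^d = \ZZ^d$, and continuity is immediate since each output coordinate equals a single input coordinate (so the argument is insensitive to the cardinality of $\cA$). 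A short computation gives $\hat{M}\,S^{}_{i}\,\hat{M}^{-1} = \prod_{j} S_{j}^{\, M_{ji}}$, whence $\hat{M} \in \cR (\XX)$ with $\rho(\hat{M}) = M$; in particular $\rho$ is surjective. The same coordinate check gives $\widehat{MN} = \hat{M}\,\hat{N}$, so $M \mapsto \hat{M}$ is a genuine homomorphic \emph{section} of $\rho$, not merely a set-theoretic lift.

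Finally I would assemble the semidirect product. Writing $H := \{\hat{M} : M \in \GL (d,\ZZ)\}$, the restriction $\rho|^{}_{H}$ is an isomorphism onto $\GL (d,\ZZ)$, so $H \cap \cS (\XX) = H \cap \ker\rho = \{\id\}$; and for any $G \in \cR (\XX)$ one has $\rho(G\,\hat{M}^{-1}) = \id$ with $M = \rho(G)$, so $G\,\hat{M}^{-1} \in \cS (\XX)$ and hence $G \in \cS (\XX)\,H$. With $\cS (\XX)$ normal this yields $\cR (\XX) = \cS (\XX) \rtimes H \simeq \cS (\XX) \rtimes \GL (d,\ZZ)$, the action being the conjugation of $\cS (\XX)$ by the relabellings $\hat{M}$.

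I expect the genuine content, and thus the main obstacle, to be the middle step: writing down $\hat{M}$ correctly, verifying it preserves the full shift (which quietly encodes why the statement is special to $\cA^{\ZZ^d}$ and would fail for a constrained subshift), and carrying out the conjugation bookkeeping so that $M \mapsto \hat{M}$ emerges as a true group homomorphism with $\rho \circ (M \mapsto \hat{M}) = \id$. Getting the variance conventions consistent (homomorphism versus anti-homomorphism, and whether $\rho(\hat{M})$ comes out as $M$, $M^{-1}$ or $M^{T}$) is where transpose or sign slips creep in, but since $\GL (d,\ZZ)$ is closed under inverse and transpose, such a slip would not change the final isomorphism type.
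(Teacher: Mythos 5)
Your proposal is correct and follows essentially the same route as the paper: the conjugation action on $\cG\simeq\ZZ^d$ gives the homomorphism onto $\GL(d,\ZZ)$ with kernel $\cS(\XX)$, and the splitting is realised by exactly the coordinate relabellings $(h^{}_{M}x)^{}_{\bs n}=x^{}_{M^{-1}\bs n}$ that the paper writes down. Your write-up merely makes the bookkeeping (section property, surjectivity, trivial intersection) explicit where the paper defers to \cite[Lemma~4]{BRY}.
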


The reasoning behind this observation is simple. Each element of
$\cR (\XX)$ must map generators of
$\cG = \langle S^{}_{1}, \ldots , S^{}_{d} \rangle \simeq \ZZ^d$ onto
generators of $\cG$, and thus induces a mapping into $\GL (d,\ZZ)$,
which is the automorphism group of the free Abelian group of rank $d$.
Now, one checks that, for any $M\in\GL (d,\ZZ)$, the mapping
$h^{}_{M}$ defined by
\[
      (h^{}_{M} x )^{}_{\bs{n}} \, = \, x^{}_{M^{-1}\bs {n}} \ts ,
\]
with $\bs{n}$ considered as a column vector, defines an
automorphism of the full shift. This leads to the semi-direct
product structure as stated.

\subsection{Tiling dynamical systems as subshifts}

Substitution tilings of constant block size are a generalisation
of substitutions of constant length, and admit an alternative
description as subshifts, for instance via a suitable symbolic
coding. Classic examples include the chair and the table tiling
\cite{Robbie}, but many more are known \cite{Nat,BG-squiral}. 

Here, we take a look at the chair tiling, which is illustrated in
Figure~\ref{fig:chair}; see \cite{TAO} for more. Its geometric
realisation makes it particularly obvious that any reasonable notion
of a group of full symmetries must somehow contain the elementary
symmetries of the square, simply because the inflation tiling (whose
orbit closure under the translation action of $\ZZ^2$ defines the
tiling dynamical system, with compact space $\XX$) is invariant under
a $4$-fold rotation and a reflection in the horizontal axis.  These
two operations generate a group that is isomorphic with the dihedral
group $D^{}_{4}$, a maximal finite subgroup of $\GL (2,\ZZ)$.

\begin{figure}[t]
\begin{center}
\includegraphics[width=\textwidth]{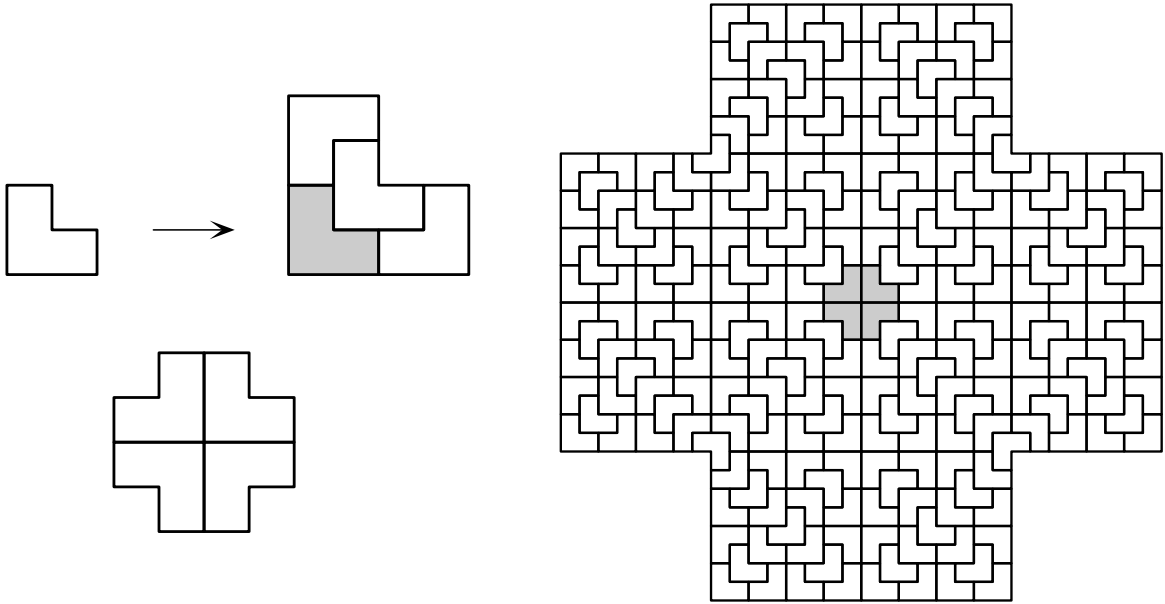}
\end{center}
\caption{The chair inflation rule (upper left panel; rotated tiles are
  inflated to rotated patches), a legal patch with full $D_{4}$
  symmetry (lower left) and a level-$3$ inflation patch generated from
  this legal seed (shaded; right panel). Note that this patch still
  has the full $D_{4}$ point symmetry (with respect to its centre), as
  will the infinite inflation tiling fixed point emerging from
  it.}\label{fig:chair}
\end{figure}

Now, none of these orthogonal transformations occur in the centraliser
of the shift group, which was shown to be minimal in \cite{Olli}. This
is a rigidity phenomenon of \emph{topological} origin, due to the
fibre structure of $\XX$ over its maximal equicontinuous factor
(MEF). Consequently, this example provides ample evidence that one
also needs to consider the normaliser. The general result reads as
follows; see \cite{BRY} for the details.

\begin{thm}
  Let\/ $\XX$ be the hull of the chair tiling, and\/ $(\XX,\ZZ^2)$ the
  corresponding dynamical system. It is topologically conjugate to a
  subshift of\/ $\{ 0,1,2,3\}^{\ZZ^2}$ with faithful shift action.
  Moreover, one has\/ $\cS (\XX) \simeq \ZZ^2$ and\/
  $\cR (\XX) \simeq \ZZ^2 \rtimes D^{}_{4}$, where\/ $D^{}_{4}$ is the
  symmetry group of the square, and a maximal finite subgroup of\/
  $\GL (2,\ZZ)$.
\end{thm}

\begin{proof}[Sketch of proof]
  It is well known that $\XX$ is a.e.\ \mbox{one{\ts}-}to-one over its
  MEF, which is a two-dimensional odometer here.  The orbits of
  non-singleton fibres over the MEF create the topological rigidity
  that enforce the centraliser to agree with the group generated by
  the lattice translations.

  The extension by $D^{}_{4}$ is constructive, via the symmetries of
  the inflation fixed point. Any further extension would require the
  inclusion of a $\GL (2,\ZZ)$-element of infinite order (because
  $D^{}_{4}$ is a maximal finite subgroup of $\GL (2,\ZZ)$), which is
  impossible by the geometric structure (and rigidity) of the
  prototiles.
\end{proof}

Similar results will occur for other tiling dynamical system, also in
higher dimensions. For instance, it is clear that the $d$-dimensional
chair (with $d\geqslant 2$; see \cite{TAO}) will have $\cS = \ZZ^d$
and $\cR = \ZZ^d \rtimes W_d$, where $W_d$ is the symmetry group of
the $d$-dimensional cube, also known as the hyperoctahedral group
\cite{MB}.

Let us note that there is no general reason why the extended symmetry
group should be a semi-direct product (though this will be the most
frequent case to encounter in the applications). In fact, in
(periodic) crystallography, the classification of space groups 
in dimensions $d \geqslant 2$ contains so-called \emph{non-symmorphic}
cases that do not show a semi-direct product structure between
translations and linear isometries \cite{Schw}.  It will be an
interesting question to identify or construct planar shift spaces that
show the planar wallpaper groups as their extended symmetry 
groups.  This and similar results would emphasise once more
that and how the extension from $\cS (\XX)$ to $\cR (\XX)$ is
relevant to capture the full symmetry of faithful shift actions.

\subsection{Shifts of algebraic origin}

There is a particularly interesting and important class of subshifts
that has attracted a lot of attention. They are known as subshifts of
algebraic origin; see \cite{K-book} and references therein. The
important point here is that such a subshift is also an Abelian group
under pointwise addition, and thus carries the correspoondig Haar
measure as a canonical invariant measure.

Here, we take a look at one of the paradigmatic examples
from this class, the Ledrappier shift \cite{Led}. This is the
subshift  $\XX^{}_{\mathrm{L}} \subset \{ 0,1 \}^{\ZZ^2}$
defined as
\begin{equation}\label{eq:L-def}
      \XX^{}_{\mathrm{L}} \, = \, \ker (1 + S^{}_{1} + S^{}_{2} )
      \, = \, \big\{ {x} \in \{0,1\}^{\ZZ^2}\!  : x^{}_{\bs{n}} +
      x^{}_{\bs{n}+\bs{e}^{}_{1}} + x^{}_{\bs{n}+\bs{e}^{}_2}
      = {0} \text{ for all } \bs{n} \in \ZZ^2 \big\} ,
\end{equation}
where the sums are pointwise, and to be taken modulo $2$.  This
definition highlights the special role of elementary lattice
triangles, whose vertices are supporting the local variables that
need to sum to $0$; see Figure~\ref{fig:L} for an illustration.  The
symmetry group is known to be minimal, which can be seen as a rigidity
phenomenon of \emph{algebraic} type. More generally, one has the
following result.

\begin{figure}[t]
\begin{center}
   \includegraphics[width=0.4\textwidth]{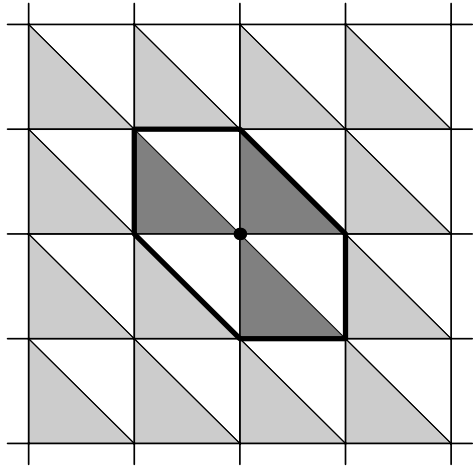}
\end{center}
\caption{Central configurational patch for Ledrappier's shift
  condition, indicating the relevance of the triangular
  lattice. Eq.~\eqref{eq:L-def} implies a condition for the values at
  the three vertices of all elementary $L$-triangles (shaded).  The
  overall pattern of these triangles is preserved by all (extended)
  symmetries. The group $D^{}_{3}$ from Theorem~\ref{thm:L} can now be
  viewed as the colour-preserving symmetry group of the `distorted'
  hexagon as indicated around the origin.  }\label{fig:L}
\end{figure}

\begin{thm}\label{thm:L}
  The symmetry group of Ledrappier's shift\/ $\XX^{}_{\mathrm{L}}$ from
  Eq.~\eqref{eq:L-def} is
\[
    \cS (\XX^{}_{\mathrm{L}}) \, = \,
  \langle S^{}_{1}, S^{}_{2} \rangle \, \simeq \, \ZZ^2,
\]
   while the group of
  extended symmetries is given by
\[
    \cR (\XX^{}_{}) \, = \, \langle S^{}_{1}, S^{}_{2} \rangle
    \rtimes H \, \simeq \, \ZZ^2 \rtimes D^{}_{3} \ts ,
\]  
where\/ $H$ is the group generated by the autormorphisms\/ $h^{}_{A}$
and\/ $h^{}_{B}$, with\/
$A= \left( \begin{smallmatrix} -1 & -1 \\ 1 & 0 \end{smallmatrix}
\right)$ and
$B= \left( \begin{smallmatrix} 0 & 1 \\ 1 & 0 \end{smallmatrix}
\right)$. This group is isomorphic with the dihedral group\/
$D^{}_3 \subset \GL (2,\ZZ)$ that is generated by the corresponding
matrices, $A$ and\/ $B$.
\end{thm}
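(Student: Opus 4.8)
The plan is to exploit the fact that every extended symmetry must permute the generators of the shift group. Concretely, any $G \in \cR (\XX^{}_{\mathrm{L}}) = \norm^{}_{\Aut (\XX^{}_{\mathrm{L}})} (\cG)$ conjugates $\cG = \langle S^{}_{1}, S^{}_{2}\rangle \simeq \ZZ^2$ into itself and hence induces a lattice automorphism, giving a homomorphism $\rho \colon \cR (\XX^{}_{\mathrm{L}}) \to \GL (2,\ZZ)$ characterised by $G \ts S^{\bs{w}} G^{-1} = S^{\rho (G) \bs{w}}$ for all $\bs{w} \in \ZZ^2$, exactly as in the reasoning behind Fact~\ref{fact:full}. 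By construction, $\ker \rho = \cent^{}_{\Aut (\XX^{}_{\mathrm{L}})} (\cG) = \cS (\XX^{}_{\mathrm{L}})$, so the statement decomposes into determining the kernel and the image of $\rho$, followed by exhibiting a splitting.

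For the kernel, I would use that $\XX^{}_{\mathrm{L}}$ is a group shift, namely the Pontryagin dual of the ring $R = \mathbb{F}_{2} [u^{\pm}, v^{\pm}] / (1 + u + v)$, where $u$ and $v$ encode $S^{}_{1}$ and $S^{}_{2}$. Since $v = 1 + u$ in characteristic $2$, one has $R \simeq \mathbb{F}_{2} [u^{\pm}, (1+u)^{\pm}]$, whose unit group is $R^{\times} = \{ u^{a} (1+u)^{b} : a,b \in \ZZ \}$, corresponding precisely to the shifts $S_{1}^{a} S_{2}^{b}$. As $R$ is a cyclic module over $\mathbb{F}_{2}[\ZZ^2]$, the shift-commuting continuous group endomorphisms of $\XX^{}_{\mathrm{L}}$ are the multiplications by elements of $R$, and the automorphisms among them are exactly $R^{\times}$; moreover, the only shift-fixed configuration is $0$, so no nontrivial translation commutes with the shift. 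The genuinely delicate point, and the main obstacle of the whole proof, is to show that an \emph{arbitrary} symmetry (a homeomorphism, a priori not respecting the group law) is already affine, and hence captured by this algebraic picture. This is precisely the algebraic rigidity referred to above, and it yields $\cS (\XX^{}_{\mathrm{L}}) = \langle S^{}_{1}, S^{}_{2}\rangle \simeq \ZZ^2$.

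To compute the image of $\rho$, I would translate preservation of $\XX^{}_{\mathrm{L}}$ into a condition on $M \in \GL (2,\ZZ)$. The candidate realising $M$ is the map $h^{}_{M}$ with $(h^{}_{M} x)^{}_{\bs{n}} = x^{}_{M^{-1} \bs{n}}$, and a direct check gives $h^{}_{M} S^{\bs{w}} h_{M}^{-1} = S^{M \bs{w}}$, so $\rho (h^{}_{M}) = M$. Now $h^{}_{M}$ preserves $\XX^{}_{\mathrm{L}}$ if and only if the transformed defining relation, supported on the triangle $\{ 0, M^{-1} \bs{e}^{}_{1}, M^{-1} \bs{e}^{}_{2}\}$, is equivalent to the original one on $\{ 0, \bs{e}^{}_{1}, \bs{e}^{}_{2}\}$. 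Since a relation of this trinomial type is unchanged under an overall shift (a monomial factor, which is a unit of $\mathbb{F}_{2}[\ZZ^2]$), this holds exactly when $M^{-1}$ maps the elementary triangle onto a lattice translate of itself. This is a finite combinatorial problem: the translate must again contain the origin, forcing the translation vector into $\{ 0, -\bs{e}^{}_{1}, -\bs{e}^{}_{2}\}$, and each of the three cases yields two admissible matrices. The six solutions are $\one, A, A^{2}$ together with $B, AB, BA$, which is precisely the group $D^{}_{3} = \langle A, B\rangle$: the order-$3$ element $A$ cyclically permutes the three lattice directions $\bs{e}^{}_{1}, \bs{e}^{}_{2}, \bs{e}^{}_{2} - \bs{e}^{}_{1}$ of the elementary triangle, while $B$ reflects it, matching the $D^{}_{3}$-symmetry of the distorted hexagon in Figure~\ref{fig:L}. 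Conversely, no matrix outside $D^{}_{3}$ can lie in the image, as realising it would force $\XX^{}_{\mathrm{L}}$ onto a subshift with an inequivalent triangular relation.

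Finally, the maps $h^{}_{A}$ and $h^{}_{B}$ are explicit extended symmetries with $\rho (h^{}_{A}) = A$ and $\rho (h^{}_{B}) = B$, so $M \mapsto h^{}_{M}$ provides a section $D^{}_{3} \to \cR (\XX^{}_{\mathrm{L}})$ of $\rho$ over its image. Together with $\ker \rho = \cS (\XX^{}_{\mathrm{L}}) \simeq \ZZ^2$, this gives the decomposition $\cR (\XX^{}_{\mathrm{L}}) = \langle S^{}_{1}, S^{}_{2}\rangle \rtimes H \simeq \ZZ^2 \rtimes D^{}_{3}$, as claimed. I expect the finite triangle computation of the image to be routine, whereas the rigidity input needed to pin down the kernel is the substantive ingredient.
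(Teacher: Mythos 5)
Your overall architecture\ ---\ the homomorphism $\rho\colon \cR(\XX_{\mathrm L}) \to \GL(2,\ZZ)$ with kernel $\cS(\XX_{\mathrm L})$, algebraic rigidity for the kernel, and the triangle computation plus the section $M \mapsto h^{}_{M}$ for the image\ ---\ is essentially the route the paper sketches: it likewise delegates the triviality of the centraliser to the isomorphism-rigidity results for algebraic $\ZZ^d$-actions \cite{KS,BS,K-book}, exhibits $D^{}_{3}$ constructively via $h^{}_{A}$ and $h^{}_{B}$, and excludes everything else on the grounds that any further element would deform the elementary triangles. Your dual-module description of the centraliser, with $R \simeq \mathbb{F}_{2}[u^{\pm},(1+u)^{\pm}]$ and $R^{\times}$ corresponding exactly to the shifts, is a welcome concretisation of what the paper only cites, and you correctly identify that the hard content there is promoting an arbitrary commuting homeomorphism to an affine map.

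The one place where your plan has a real gap is the determination of the \emph{image} of $\rho$. Your finite combinatorial computation answers a different question: it classifies those $M$ for which the coordinate permutation $h^{}_{M}$ preserves $\XX_{\mathrm L}$. An element $G$ of the normaliser with $\rho(G)=M$ need not be of the form (symmetry)$\,\circ\, h^{}_{M}$, so a priori some $M \notin D^{}_{3}$ could be realised by an exotic homeomorphism. To close this, observe that $h_{M}^{-1}\circ G$ is a shift-commuting homeomorphism from $\XX_{\mathrm L}$ onto the algebraic subshift $h_{M}^{-1}(\XX_{\mathrm L})$ defined by the transformed trinomial, and then invoke the isomorphism rigidity a \emph{second} time (or an invariant such as the set of nonexpansive directions, which $M$ must permute) to conclude that topological conjugacy forces the two defining ideals to agree up to units, i.e.\ forces the transformed triangle to be a lattice translate of the original. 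Your sentence that realising such an $M$ ``would force $\XX_{\mathrm L}$ onto a subshift with an inequivalent triangular relation'' gestures at this, but gives no reason why mere topological conjugacy is obstructed by inequivalence of the relations\ ---\ that is precisely where the second rigidity input enters, so the image computation is not ``routine'' in the sense your closing sentence suggests. The paper's own sketch is equally terse on this point, but a complete write-up must make it explicit.
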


\begin{proof}[Sketch of proof]
  The triviality of the centraliser is a consequence of the group
  structure, which heavily restricts the homeomorphisms between
  irreducible subshifts that commute with the translations
  \cite{KS,BS,K-book}.

  For the extension to the normaliser, the presence of $D^{}_{3}$ is
  again constructive, and evident from Figure~\ref{fig:L}. One then
  excludes any element of order $6$ that would complete $D^{}_{3}$ to
  $D^{}_{6}$, and finally any element of infinite order that could extend
  the group $D^{}_{3}$. Both types of extensions are impossible
  because any such additional element would change the defining
  condition by deforming the elementary triangles.
\end{proof}

This example is of interest for a number of reasons. First of all, it
shows the phenomenon of rank-$1$ entropy, which is to say that the
number of circular configurations grows exponentially in the
\emph{radius} of the patch, but not in the area. While this means that
the topological entropy still vanishes, Ledrappier's shift is not an
example of low complexity. Second, the spectral structure displays a
mixture of trivial point spectrum with further absolutely continuous
(Lebesgue) components \cite{BW}, which highlights the fact that the
inverse problem of structure determination, in the presence of mixed
spectra, is really a lot more complex than in the case of pure point
spectra. Once again, capturing the full extended symmetry group is 
an important first step in this analysis, as is well-known from
classical crystallography \cite{Schw}.

\subsection{Visible lattice points}

Let us consider the planar point set 
\[
    V \, := \, \{ (x,y)\in\ZZ^2 : \gcd(x,y) =1\} \,\subset\, \ZZ^2 ,
\]
which is known as the set of \emph{visible} (or primitive)
\emph{lattice points}; see the cover page of \cite{Apo} for an
illustration.  The set $V$ has numerous fascinating properties, both
algebraically and geometrically.  In particular, it fails to be a
Delone set, because it has holes of arbitrary size that even repeat
\mbox{lattice{\ts}-}periodically. Nevertheless, the natural density
exists and equals $6/\pi^2 = 1/\zeta(2)$. Moreover, the set $V$ is
invariant under the group $\GL (2,\ZZ)$, which acts transitively on
$V$; see \cite{BMP} and references therein.

The corresponding subshift $\XX^{}_{V}$ is defined as the orbit
closure of the characteristic function $1^{}_V$ under the shift action
of $\ZZ^2$, which turns $(\XX^{}_{V}, \ZZ^2)$ into a topological
dynamical system with faithful shift action and positive topological
entropy. This system, like the \mbox{square{\ts}-}free shift from
above, is \emph{hereditary}, which implies rigidity for the symmetry
group.  On the other hand, due to the way that $\GL (2,\ZZ)$-matrices
act on it, the normaliser is the maximal extension of the centraliser
in this case \cite{BHL}. In fact, there is no reason to restrict to
the planar case here, as the visible lattice points can be defined for
$\ZZ^d$ with any $d\geqslant 2$ (the case $d=1$ gives a finite set
that is not of interest). Thus, one has the following result.

\begin{thm}
  Let\/ $\XX^{}_{V}$ be the subshift defined by the visible lattice
  points of\/ $\ZZ^d$, where\/ $d\geqslant 2$. Then, $\XX^{}_{V}$ has
  faithful shift action with minimal symmetry group,
  $\cS (\XX^{}_{V}) = \ZZ^d$, while the extended symmetry group
  emerges as the maximal extension of it,
  $\cR (\XX^{}_{V}) = \ZZ^2 \rtimes \GL (2,\ZZ)$.
\end{thm}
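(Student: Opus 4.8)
The plan is to treat the centraliser and the normaliser separately, taking the $d$-dimensional version throughout (so the target group is $\ZZ^d \rtimes \GL (d,\ZZ)$). Faithfulness of the shift action is immediate, since $V$ admits no nonzero lattice period: if $V + \bs{n} = V$ held for some $\bs{n} \neq \bs{0}$, one could contradict it by tracking primitive vectors along the direction of $\bs{n}$. Hence the translates $S^{\bs{n}} 1^{}_V$ are pairwise distinct and $\cG \simeq \ZZ^d$. For the minimality of the centraliser, the essential input is that $\XX^{}_V$ is \emph{hereditary}, so that $1^{}_V$ and its shifts are the pointwise maximal configurations of the hull and are combinatorially distinguished. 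Following the rigidity mechanism that Mentzen used for the square-free shift \cite{Mentzen}, I would argue that any homeomorphism commuting with every $S^{}_{i}$ must preserve this maximal structure and is therefore forced to coincide with a shift, giving $\cS (\XX^{}_V) = \langle S^{}_{1}, \ldots , S^{}_{d} \rangle \simeq \ZZ^d$. This is the main obstacle: since $\XX^{}_V$ has positive topological entropy, the standard low-complexity arguments are unavailable, and the triviality of the centraliser must be extracted purely from the order structure of the hereditary hull.

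For the normaliser, I would reason exactly as in the justification of Fact~\ref{fact:full}. Any $G \in \cR (\XX^{}_V)$ normalises $\cG$, hence permutes its generators and induces a matrix $\pi (G) \in \GL (d,\ZZ) = \Aut (\ZZ^d)$. This defines a homomorphism $\pi \colon \cR (\XX^{}_V) \to \GL (d,\ZZ)$ whose kernel is precisely $\cent^{}_{\Aut (\XX^{}_V)} (\cG) = \cS (\XX^{}_V) \simeq \ZZ^d$. The decisive step is the surjectivity of $\pi$, which I would establish constructively: for $M \in \GL (d,\ZZ)$, set $(h^{}_{M} x)^{}_{\bs{n}} = x^{}_{M^{-1} \bs{n}}$. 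As $M$ is unimodular, it acts bijectively on $\ZZ^d$ and preserves primitivity (a vector is primitive exactly when it extends to a $\ZZ$-basis, and this is invariant under $\GL (d,\ZZ)$), so $M \cdot V = V$ and therefore $h^{}_{M} \ts 1^{}_V = 1^{}_V$. Since $h^{}_{M}$ intertwines the shift action through $M$ and fixes the generating point $1^{}_V$, it carries the orbit closure $\XX^{}_V$ onto itself; thus $h^{}_{M} \in \cR (\XX^{}_V)$ with $\pi (h^{}_{M}) = M$, and $\pi$ is onto.

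Finally, the assignment $M \mapsto h^{}_{M}$ is a homomorphic section of $\pi$, so the short exact sequence $1 \to \ZZ^d \to \cR (\XX^{}_V) \to \GL (d,\ZZ) \to 1$ splits, yielding $\cR (\XX^{}_V) = \cS (\XX^{}_V) \rtimes \GL (d,\ZZ) \simeq \ZZ^d \rtimes \GL (d,\ZZ)$, with $\GL (d,\ZZ)$ acting on the lattice of shifts in the natural way; compare \cite{BHL}. As indicated, I expect the hereditary rigidity of the centraliser to be the only genuinely difficult point, the normaliser computation being a direct consequence of the $\GL (d,\ZZ)$-invariance of the visible points.
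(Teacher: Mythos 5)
Your proposal follows essentially the same route as the paper: the triviality of the centraliser is deduced from heredity via (a mild generalisation of) Mentzen's argument for the square-free shift, and the normaliser is obtained exactly as for the full shift in Fact~\ref{fact:full}, using that $\GL(d,\ZZ)$ preserves primitivity so that each $h^{}_{M}$ fixes $1^{}_{V}$ and hence preserves the hull, giving the split extension $\ZZ^d \rtimes \GL(d,\ZZ)$. You also correctly identify the hereditary rigidity of the centraliser as the only substantive step (and silently repair the typo $\ZZ^2 \rtimes \GL(2,\ZZ)$ in the statement to the intended $d$-dimensional form).
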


\begin{proof}[Sketch of proof]
  Here, the triviality of the centraliser, as in the earlier example
  of the \mbox{square{\ts}}-free shift, is a consequence of the
  \emph{heredity} of the subshift \cite{BHL}, and really follows from
  a mild generalisation of Mentzen's approach \cite{Mentzen}.  The
  extension to the normaliser, as explained above, is by all of
  $\GL (d,\ZZ)$, where the semi-direct product structure is the same
  as for the full shift in Fact~\ref{fact:full}.
\end{proof}

More generally, one can study systems of this kind as defined from
primitive lattice systems, for instance in the spirit of
\cite{BHS}. This also covers subshifts that are generated from rings
of integers in general algebraic number fields subject to certain
freeness conditions. This gives a huge class of examples that can be
viewed as multi-dimensional generalisations of $\cB$-free
systems. Interestingly, they are also examples of weak model sets
\cite{BHS}, which gives access to a whole new range of tools from the
interplay of dynamical systems and algebraic number theory
\cite{KR-1,KR-2,Keller,KKL}, in the spirit of the original approach by
Y.~Meyer \cite{Meyer}.

\bigskip
\bigskip

\section*{Acknowledgements}

A substantial part of this exposition is based on joint work (both
past and ongoing) with John Roberts, who introduced me to the concepts
over 25 years ago. More recent activities also profited a lot from the
interaction and cooperation with Christian Huck, Mariusz Lema\'{n}czyk
and Reem Yassawi.  It is a pleasure to thank the CIRM in Luminy for
its support and the stimulating atmosphere during the special program
in the framework of the Jean Morlet semester `Ergodic Theory and
Dynamical Systems in their Interactions with Arithmetic and
Combinatorics', where part of this work was done.

\bigskip
\bigskip

\end{document}